\renewcommand{\mathcal}{\mathscr}
\newtheorem{theorem}{Theorem}[section]
\newtheorem{proposition}[theorem]{Proposition}
\newtheorem{corollary}[theorem]{Corollary}
\newtheorem{lemma}[theorem]{Lemma}
\renewcommand{\le}         {\leqslant}
\renewcommand{\leq}         {\leqslant}
\renewcommand{\ge}         {\geqslant}
\renewcommand{\geq}         {\geqslant}
\renewcommand{\epsilon}{\varepsilon}
\renewcommand{\theta}{\vartheta}
\newcommand{\R}{{I\!\!R}}
\newcommand{\eps}{\varepsilon}
\renewcommand{\d}{\delta}
\newcommand{\C}{{\mathcal{C}}}
\begin{document}

\author{Alessio Figalli}
\address{The University of Texas at Austin,
Mathematics Department RLM 8.100,
2515 Speedway Stop C1200,
Austin TX 78712-1202 (USA)}
\email{figalli@math.utexas.edu}

\author{Enrico Valdinoci}
\address{Weierstra{\ss} Institut f\"ur Angewandte Analysis und Stochastik\\
Mohrenstra{\ss}e 39, D-10117 Berlin (Germany)}
\email{enrico.valdinoci@wias-berlin.de}

\title[Regularity and Bernstein-type results for nonlocal minimal surfaces]{Regularity and Bernstein-type results\\
 for nonlocal minimal surfaces}

\begin{abstract}
We prove that, in every dimension, Lipschitz nonlocal minimal surfaces are smooth.
Also, we extend to the nonlocal setting a famous theorem of De Giorgi \cite{DG} stating that
the validity of Bernstein's theorem in dimension $n+1$ is a consequence of the nonexistence of  $n$-dimensional singular minimal cones in $\R^n$.
\end{abstract}

\keywords{$s$-minimal surfaces, regularity theory, Bernstein's Theorem.}
\subjclass[2010]{49Q05, 35B65, 35R11, 28A75.}
\thanks{Supported by 
NSF Grant DMS-1262411 (AF) and
ERC Grant $\eps$ 277749 (EV)}

\maketitle

\section{Introduction}

Given~$n\ge1$ and~$s\in(0,1)$, we investigate the regularity of nonlocal $s$-minimal surfaces in~$\R^{n+1}$.
To begin, we recall the notion of~$s$-perimeter and~$s$-minimal surface, as introduced in~\cite{CRS}.

Given two disjoint measurable sets~$F,G\subseteq\R^{n+1}$ we consider the $s$-interaction
between them defined by
$$ L(F,G):=\iint_{F\times G}\frac{dx\,dy}{|X-Y|^{n+1+s}}.$$
Given a measurable set~$E$ and a bounded set~$\Omega\subset\R^{n+1}$,
the ``$s$-perimeter'' of the set~$E$ inside~$\Omega$ is defined as
$$ {\rm Per}_s(E,\Omega):= L(E\cap\Omega,\R^{n+1})+L(E\setminus\Omega,\Omega\setminus E).$$
We say that~$E$ is a ``$s$-minimal surface'' in~$\Omega$ if~${\rm Per}_s(E,\Omega)<+\infty$
and for any measurable set~$F\subseteq\R^{n+1}$ with~$F\setminus\Omega=E\setminus\Omega$
we have that
$$ {\rm Per}_s(E,\Omega)\le {\rm Per}_s(F,\Omega).$$
If~$E$ is a $s$-minimal surface in any ball, we simply say that~$E$ is
a $s$-minimal surface.
Namely, $s$-minimal surfaces are local minimizers of the $s$-perimeter functional.
The ``$s$-mean curvature'' of~$E$ at a point~$X\in\partial E$ is defined by
\begin{equation}\label{DE}
I[E](X):=\int_{\R^{n+1}}\frac{\chi_E(Y)-\chi_{E^c}(Y)}{|X-Y|^{n+1+s}}\,dy,\end{equation}
where~$E^c:=\R^{n+1}\setminus E$. We remark that if~$\partial E$ is $C^2$ in a neighborhood of~$X$,
then~$I[E](X)$ is well-defined in the principal value sense. On the other hand, 
while a priori 
a $s$-minimal surface $E$ may not be smooth, it is shown in~\cite{CRS} that it satisfies the equation~$I[E](X)=0$ for any~$X\in \partial E$ in a suitable viscosity sense (in particular, it satisfies
the equation in the classical sense at every point where $\partial E$ is~$C^2$).

With this notation, $s$-minimal surfaces have vanishing $s$-mean curvature, and the analogy
with the classical perimeter case is evident.
To make the analogy even stronger, we recall that, as~$s\nearrow1$, the $s$-perimeter
converges to the classical perimeter, with good geometric and functional analytic
properties, see~\cite{CV, ADM}.

{F}rom the results in~\cite{CRS, SV, BFV} it is known that boundaries of $s$-minimal
surfaces are $C^\infty$ with the possible
exception of a closed singular set of
Hausdorff dimension at most~$n-3$.

The first result of this paper shows that
Lipschitz $s$-minimal surfaces are smooth.
Notice that, in the classical case, this result is a consequence of the De Giorgi-Nash Theorem
on the H\"older regularity of solutions to uniformly elliptic equations in divergence form.
However, in this nonlocal setting it does not seem possible
to use the regularity theory
for nonlocal equations to deduce this result and
we need to employ geometric arguments instead.

\begin{theorem}\label{T1}
Let $n\ge1$ and~$E$ be a $s$-minimal surface in~$B_1\subset \R^{n+1}$. Suppose that~$\partial E\cap B_1$ is
locally Lipschitz.
Then~$\partial E\cap B_1$
is $C^\infty$.
\end{theorem}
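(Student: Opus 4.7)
The statement is local, so fix $X_0\in\partial E\cap B_1$ and work in a neighbourhood where $\partial E$ is a Lipschitz graph in some direction, say $e_{n+1}$. The plan is to blow $E$ up at $X_0$ to obtain a Lipschitz $s$-minimal cone $E_\infty$, show that such cones must be half-spaces, and then invoke the improvement-of-flatness theorem of Caffarelli--Roquejoffre--Savin~\cite{CRS} to get $C^{1,\alpha}$ regularity, finally bootstrapping up to $C^\infty$ through the higher regularity result of~\cite{BFV}.

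\textbf{Blow-up to a Lipschitz cone.} Set $E_r:=r^{-1}(E-X_0)$. By the scaling invariance of $s$-minimality, the standard compactness for $s$-minimal sets, and the monotonicity formula for the $s$-perimeter, a subsequence $E_{r_k}$ converges in $L^1_{\rm loc}$ to an $s$-minimal cone $E_\infty\subseteq\R^{n+1}$ with vertex at $0$. The local Lipschitz constant of $\partial E$ near $X_0$ is invariant under rescaling, so each $\partial E_r$ is a Lipschitz graph in direction $e_{n+1}$ with the same constant $L$, and passing to the limit $\partial E_\infty$ is a globally defined Lipschitz graph with constant $\leq L$. Hence $E_\infty$ is a Lipschitz $s$-minimal cone.

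\textbf{Classification of Lipschitz $s$-minimal cones.} This is the core of the argument. The crucial tool is the strict maximum principle for $s$-minimal sets, which is a consequence of the equation $I[E]=0$ in the viscosity sense and of the strict positivity of the kernel in~\eqref{DE}: if $F_1\subseteq F_2$ are $s$-minimal and $\partial F_1,\partial F_2$ touch at an interior point, then $F_1=F_2$. Let $\mathcal{C}:=\{v\in\R^{n+1}:v_{n+1}>L|v'|\}$ be the open monotonicity cone; the Lipschitz property of $\partial E_\infty$ gives $E_\infty\subseteq E_\infty+tv$ for every $v\in\mathcal{C}$ and $t>0$, and both sets are $s$-minimal. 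Applying the strict maximum principle yields a dichotomy for each $v\in\mathcal{C}$: either $E_\infty+tv=E_\infty$ for all $t>0$, so that $E_\infty$ is translation-invariant in direction $v$, or the two boundaries are strictly separated for every $t>0$. Combining dilation invariance (the cone property) with translation invariance in some direction of $\mathcal{C}$ forces $E_\infty$ to be a cylinder over a lower-dimensional $s$-minimal set, while the strictly separated alternative is incompatible with the homogeneity of a non-half-space cone when $v$ is allowed to vary in the open set $\mathcal{C}$. A dimension-reduction argument, together with the openness of $\mathcal{C}$, then forces $E_\infty$ to be a half-space.

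\textbf{Bootstrap and main obstacle.} Once the classification above is in place, every blow-up of $E$ at $X_0$ is a half-space, so for every $\varepsilon>0$ there exists $r_\varepsilon>0$ such that $\partial E\cap B_{r_\varepsilon}(X_0)$ lies in an $\varepsilon$-thin slab. The improvement-of-flatness theorem of~\cite{CRS} then yields that $\partial E$ is $C^{1,\alpha}$ in a neighbourhood of $X_0$, and the higher-regularity result of~\cite{BFV} upgrades this to $C^\infty$. The hard part is the classification of Lipschitz $s$-minimal cones: as the authors emphasize, one cannot linearize the equation $I[E]=0$ around the graph and apply a De Giorgi--Nash-type theory to the resulting nonlocal operator, so the step must be carried out geometrically, by carefully combining the strict maximum principle, the cone structure, and the openness of the monotonicity cone $\mathcal{C}$.
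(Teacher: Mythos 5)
Your overall architecture (blow up at a point to get a Lipschitz $s$-minimal cone, show the cone is a half-space, then conclude via the improvement-of-flatness theorem of \cite{CRS} and the bootstrap of \cite{BFV}) matches the paper's. But the core step --- the classification of Lipschitz $s$-minimal cone graphs as half-spaces --- is where all the work lies, and the mechanism you propose for it does not function. For any $v=(v',v_{n+1})$ \emph{strictly inside} the monotonicity cone $\mathcal{C}$ (i.e.\ $v_{n+1}>L|v'|$), the subgraph inclusion gives $u(x-tv')+tv_{n+1}-u(x)\ge t(v_{n+1}-L|v'|)>0$ automatically, so $\partial E_\infty$ and $\partial(E_\infty+tv)$ are \emph{always} strictly separated and never touch, whatever the cone is: your dichotomy collapses to its uninformative branch for every $v\in\mathcal{C}$ (and the other branch is vacuous anyway, since a graph cannot be invariant under a translation with positive vertical component). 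The assertion that ``strict separation for all $v\in\mathcal{C}$ is incompatible with the homogeneity of a non-half-space cone'' is unsubstantiated and, as a purely geometric statement about Lipschitz $1$-homogeneous functions, false. The information must be extracted from the \emph{boundary} directions of $\mathcal{C}$, i.e.\ from whether the optimal Lipschitz constant $M$ is (asymptotically) attained, and a soft maximum principle cannot see this.

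What the paper actually does, and what is missing from your proposal, is a quantitative version of this sliding argument. First, a measure-theoretic lemma (Proposition~\ref{PP-1}) shows that if the cone is \emph{not} $\eps_0$-flat, then for every direction $\sigma$ the set of $x\in B_1$ where the difference quotient $u(x+t\sigma)-u(x)$ falls below $(M-\mu_0)t$ has measure at least $\mu_0|B_1|$. Then one slides the translated graph down by a bump $\theta t\,w(x)$ and compares fractional mean curvatures at a touching point $X_0$: the perturbation contributes an error at most $C\theta t$ to $I$, while the gap region $W$ of measure $\gtrsim\mu_0^2 t$ between the two sets contributes at least $c\mu_0^2 t$ to $I[G_{\theta,\alpha_0}](X_0)-I[F](X_0)$ because the kernel in \eqref{DE} is bounded below on a bounded set. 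Choosing $\theta$ small yields the strict gradient improvement $|\nabla u|\le M-\theta$ on an annulus, hence everywhere by homogeneity, contradicting the optimality of $M$. This use of the nonlocality of $I$ --- a positive-measure gap far from the contact point is felt at the contact point --- is the heart of the proof and has no counterpart in your argument. (Two smaller points: you should dimension-reduce so that the cone is smooth away from the origin before evaluating $I$ classically at a touching point, and note that the paper runs the whole argument as a direct contradiction with the flatness hypothesis rather than as a standalone cone classification, though these framings are equivalent.)
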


We say that a $s$-minimal surface $E$ is a ``$s$-minimal graph'' if it
can be written as a global graph in some direction (that is, up
to a rotation,~$E=\{ (x,\tau)\in\R^n\times\R \,:\, \tau<u(x)\}$
for some function~$u:\R^n\to\R$), and it is a ``$s$-minimal cone''
if  it is a cone (that is, up to a translation, $E=tE$ for any $t>0$).
A variant of the techniques used in the proof of Theorem~\ref{T1} allows us to 
show that the validity of Bernstein's theorem in dimension $n+1$ is a consequence of the nonexistence of  $n$-dimensional singular $s$-minimal cones in $\R^n$,
thus extending to the fractional case a famous result of De Giorgi for minimal surfaces \cite{DG}:

\begin{theorem}\label{T2}
Let $E=\{ (x,\tau)\in\R^n\times\R \,:\, \tau<u(x)\}$ be a $s$-minimal graph,
and assume there there are no singular $s$-minimal cones in dimension $n$
(that is, if $\C\subset \R^{n}$ is a $s$-minimal cone, then $\C$ is a half-space).
Then~$u$ is an affine function (thus~$E$ is a half-space).
\end{theorem}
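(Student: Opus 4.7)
My approach mirrors De~Giorgi's classical dimension-reduction argument for Bernstein's theorem, with Theorem~\ref{T1} now playing the role of the De~Giorgi--Nash interior regularity estimate. The plan has three stages: (a) blow $E$ down to an $s$-minimal cone $\C_\infty \subset \R^{n+1}$ which is still a subgraph in the direction $e_{n+1}$; (b) use the hypothesis on $\R^n$ to show $\C_\infty$ must be a halfspace; (c) upgrade this halfspace blow-down to global affinity of $u$ via improvement of flatness.

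For~(a) I would consider $E_R := R^{-1} E$ and invoke the compactness of $s$-minimal sets together with the monotonicity of the rescaled $s$-perimeter (see \cite{CRS}) to extract a subsequence $R_j \to +\infty$ along which $E_{R_j} \to \C_\infty$ in $L^1_{\rm loc}(\R^{n+1})$. The limit $\C_\infty$ is an $s$-minimal cone with vertex at the origin; since each $E_{R_j}$ is a subgraph in direction $e_{n+1}$, so is the limit, and we can write $\C_\infty = \{\tau < v(x)\}$ for some positively $1$-homogeneous $v:\R^n\to\R$.

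For~(b), fix $P_0 \in \partial \C_\infty \setminus \{0\}$. The cone property implies that the entire open ray through $P_0$ lies in $\partial\C_\infty$, so every blow-up of $\C_\infty$ at $P_0$ is an $s$-minimal cone translation-invariant along the direction $P_0$, hence of the form $\C'' \times \R$ where $\C'' \subset P_0^\perp \cong \R^n$ is itself an $s$-minimal cone. By hypothesis $\C''$ is a halfspace of $\R^n$, so the tangent cone of $\C_\infty$ at $P_0$ is a halfspace of $\R^{n+1}$, and the regularity results recalled in the introduction \cite{CRS, SV, BFV} yield that $\partial \C_\infty$ is $C^\infty$ near $P_0$. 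As $P_0$ was arbitrary, $v$ is smooth on $\R^n \setminus \{0\}$; by $1$-homogeneity $v$ is bounded and Lipschitz on $S^{n-1}$, hence globally Lipschitz on $\R^n$. Theorem~\ref{T1} then upgrades $\C_\infty$ to a $C^\infty$ set across the origin as well, and comparing the Taylor expansion of $v$ at the origin with the $1$-homogeneity identity $v(\lambda x) = \lambda v(x)$ forces $v$ to be linear; thus $\C_\infty$ is a halfspace.

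For~(c), once every blow-down of $E$ is a halfspace, $E$ is arbitrarily close to a hyperplane on arbitrarily large balls. Iterating the improvement-of-flatness theorem for nonlocal minimal surfaces \cite{CRS} on scales tending to infinity then forces the normal to $\partial E$ to have no oscillation at any scale, so $u$ must be affine. The two points I expect to be most delicate are: ensuring $v$ defines a genuine (non-degenerate, not $\pm\infty$-valued) graph so that Theorem~\ref{T1} is actually applicable in~(b); and running improvement of flatness uniformly in~(c)---since the halfspace limit is a priori extracted only along a subsequence, the graph hypothesis on $E$ is essential both to pin down a unique limiting halfspace and to apply the flatness estimate on arbitrarily large balls.
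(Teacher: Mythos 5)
Your overall architecture (blow down to an $s$-minimal cone, use the hypothesis on $\R^n$ plus dimension reduction to control its singular set, show the cone is a half-space, then transfer the flatness back to $E$ at large scales) is the same as the paper's, and several pieces are sound: the extraction of the cone in (a), the dimension-reduction argument locating the singular set at the origin in (b), the Taylor-expansion-versus-homogeneity step, and step (c), which is essentially the content of Lemma~\ref{BL}. However, there are two genuine gaps in the middle of your argument, and they sit exactly where the real work of the proof lies.

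First, you write $\C_\infty=\{\tau<v(x)\}$ with $v:\R^n\to\R$ real-valued without justification. A blow-down of a graph is a priori only a hypograph: $v$ could take the values $\pm\infty$ on nontrivial sets, i.e.\ $\C_\infty$ could degenerate to a vertical cylinder $\C\times\R$. You flag this as ``delicate'' but give no argument, and it is not a technicality: the paper rules it out by a strong maximum principle (if $\partial E_\infty$ touches $\partial E_\infty+\tau_\infty e_{n+1}$ then $E_\infty=\C\times\R$ with $\C$ an $s$-minimal cone in $\R^n$, hence a half-space by hypothesis, and then Lemma~\ref{BL} forces $E=\C\times\R$, contradicting that $E$ is a graph). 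Second, and more seriously, from ``$\partial\C_\infty$ is $C^\infty$ away from the origin'' you infer ``$v$ is smooth on $\R^n\setminus\{0\}$, hence Lipschitz on $S^{n-1}$ by compactness, hence globally Lipschitz by homogeneity.'' This inference is invalid: smoothness of the boundary \emph{as a hypersurface} does not prevent its tangent planes from becoming vertical, and near such points the graph function is not Lipschitz (compare $\tau=-x^{1/3}$, a smooth curve bounding a hypograph whose graph function fails to be Lipschitz at $0$). Excluding vertical tangents --- equivalently, proving the uniform gradient bound on an annulus that homogeneity then propagates --- is the crux of the paper's proof of Theorem~\ref{T2}: it is obtained by sliding the perturbed set $F_t=\{\tau\le u_\infty(x+t\theta w(x)\sigma)-t\}$ underneath $E_\infty$ and comparing the nonlocal mean curvatures at a first contact point, the nonlocal analogue of the Jacobi-field/strong-maximum-principle argument for classical minimal graphs. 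Your proposal omits this step entirely, so the appeal to Theorem~\ref{T1} at the end of (b) --- which requires a Lipschitz graph as input --- is not yet available.
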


The above result combined with the non-existence of 
$s$-minimal cones in dimension $n\leq 2$ (see \cite{SV})  implies the following:

\begin{corollary}
\label{C}
Let $E=\{ (x,\tau)\in\R^n\times\R \,:\, \tau<u(x)\}$ be a $s$-minimal graph, and assume that $n \in \{1,2\}$.
Then $u$ is an affine function.
\end{corollary}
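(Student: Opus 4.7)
The plan is to obtain the corollary as an immediate consequence of Theorem~\ref{T2}, with the hypothesis on the nonexistence of singular cones verified by invoking the cited low-dimensional classification from \cite{SV}.

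Concretely, I would proceed as follows. By assumption, $E=\{(x,\tau)\in\R^n\times\R : \tau<u(x)\}$ is an $s$-minimal graph with $n\in\{1,2\}$, so the first hypothesis of Theorem~\ref{T2} is automatic. For the second hypothesis, I need to know that every $s$-minimal cone $\C\subset\R^n$ is a half-space when $n\in\{1,2\}$. This is exactly the content of the result of \cite{SV} referenced just above the statement: nonexistence of (nontrivial) $s$-minimal cones in dimension $\le 2$ is precisely the assertion that any such cone reduces to a half-space. With both hypotheses of Theorem~\ref{T2} in hand, the conclusion of that theorem gives that $u$ is affine.

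Since the entire argument is a one-line combination of Theorem~\ref{T2} with the classification of \cite{SV}, there is no real obstacle. The only point deserving a brief sanity check is dimensional bookkeeping: Theorem~\ref{T2} treats graphs over $\R^n$ sitting inside $\R^{n+1}$, and the required cone classification is in the base dimension $n$ (not the ambient dimension $n+1$). Thus for $n=1$ and $n=2$ (i.e.\ graphs in $\R^2$ and $\R^3$), the cited result in \cite{SV} applies in the correct dimension, and no further work is needed.
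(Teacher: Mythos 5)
Your proposal is correct and is exactly the argument the paper intends: Corollary~\ref{C} is stated as an immediate consequence of Theorem~\ref{T2} together with the classification of $s$-minimal cones in dimension $n\le 2$ from \cite{SV}, and your dimensional bookkeeping (the cone hypothesis is needed in the base dimension $n$, which is where \cite{SV} applies) is the right sanity check.
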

When~$n=1$ Corollary~\ref{C} is a particular case of the result in~\cite{SV}, but for~$n=2$
the result is new. 

The paper is organized as follows. Some preliminary results on Lipschitz
functions are collected in Section~\ref{S:1},
and a useful observation on the asymptotic behavior of the $s$-minimal
cones at large scale is given in Section~\ref{S:B}. Then, the proofs of Theorems~\ref{T1}
and~\ref{T2} are given in Sections~\ref{S:2} and~\ref{S:3}, respectively.

\section{Technical lemmata on Lipschitz functions}\label{S:1}

This section contains some auxiliary results
of elementary nature.

In the first lemma we show that
Lipschitz functions whose gradient is almost constant
in a suitably large set need to be uniformly close to
an affine hyperplane:

\begin{lemma}\label{Co1}
Let~$M>0$ and $\omega\in\R^n$ with $|\omega|\le M$.
Given~$\eps>0$ there exists~$\d=\delta(n,\eps,M)>0$, such
that the following holds: if~$u:B_1\rightarrow\R$ is a $M$-Lipschitz function
satisfying
$$ \Big| \{ x\in B_1 \,:\, |\nabla u(x)-\omega|\ge\d\}\Big|\le\d,$$
then~$|u(x)-u(0)-\omega\cdot x|\le\eps$ for any~$x\in B_1$.
\end{lemma}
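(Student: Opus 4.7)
The naive identity
\[
u(x) - u(0) - \omega\cdot x \;=\; \int_0^1 \bigl(\nabla u(tx) - \omega\bigr)\cdot x\,dt
\]
is useless, because the bad set $B:=\{x\in B_1\,:\,|\nabla u(x)-\omega|\ge\delta\}$, although of small $n$-dimensional measure, may entirely contain the segment $[0,x]$. My plan is therefore to \emph{average} the identity along nearby parallel segments, so that the bad set is seen in the right dimension.

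Fix a small $r>0$ (to be chosen) and, provisionally, $x\in B_{1-2r}$. For every $y\in B_r$ the fundamental theorem of calculus gives
\[
u(x+y) - u(y) - \omega\cdot x \;=\; \int_0^1 \bigl(\nabla u(y+tx) - \omega\bigr)\cdot x\,dt.
\]
I would integrate this over $y\in B_r$, swap the two integrals, and substitute $z=y+tx$. Since $|y+tx|\le r+|x|<1$ for all $y\in B_r$ and $t\in[0,1]$, the resulting region stays in $B_1$, so the assumption on $\nabla u$ is applicable. Splitting according to whether $z\in B$: on $B^{c}$ the integrand is bounded by $\delta|x|$, while on $B$ I only use $|\nabla u-\omega|\le 2M$ and pay the global bound $|B|\le \delta$. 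This yields
\[
\left|\frac{1}{|B_r|}\int_{B_r}\!\bigl[u(x+y)-u(y)-\omega\cdot x\bigr]\,dy\right|
\;\le\; \delta|x| + \frac{2M\delta|x|}{|B_r|}.
\]
On the other hand, the $M$-Lipschitz character of $u$ controls the difference between this average and $u(x)-u(0)-\omega\cdot x$ by $2Mr$.

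Putting everything together, I would first select $r=r(\eps,M)$ so that $2Mr\le \eps/4$, and then choose $\delta=\delta(n,\eps,M)$ small enough that the two terms above sum to at most $\eps/4$, obtaining $|u(x)-u(0)-\omega\cdot x|\le \eps/2$ for $x\in B_{1-2r}$. To cover the remaining annulus $\{1-2r<|x|<1\}$ I would radially project to $x':=(1-2r)x/|x|$: since $|x-x'|\le 2r$ and both $u$ and $y\mapsto\omega\cdot y$ are $M$-Lipschitz, the triangle inequality absorbs an extra $4Mr\le \eps/2$ and finishes the proof. The only delicate point is keeping the averaged segments $\{y+tx\}$ inside $B_1$ (where gradient information is available); this is exactly why the interior estimate is proved on $B_{1-2r}$ first and the boundary layer is then treated separately by Lipschitz interpolation. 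The rest is routine bookkeeping.
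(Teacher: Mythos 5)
Your argument is correct, but it takes a genuinely different route from the paper. The paper sets $w(x):=u(x)-\omega\cdot x$, bounds $\|\nabla w\|_{L^1(B_1)}\le C\delta$ by splitting into the good and bad sets, interpolates to get $\|\nabla w\|_{L^{n+1}(B_1)}\le C\delta^{1/(n+1)}$, and then invokes the Morrey--Sobolev embedding $W^{1,n+1}(B_1)\hookrightarrow L^\infty(B_1)$ to conclude, with the explicit rate $\eps=C\delta^{1/(n+1)}$. You instead average the fundamental theorem of calculus over a ball of starting points of radius $r$, so that after the substitution $z=y+tx$ the bad set is measured in the correct ($n$-dimensional) way, and you handle the boundary annulus by Lipschitz interpolation. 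Your approach is more elementary (no Sobolev embedding, only Fubini and Rademacher; one should note, as is standard, that the FTC identity along the segment $[y,y+x]$ holds for a.e.\ $y$ because the non-differentiability set of $u$ is null), at the price of the two-step choice of parameters ($r$ first, then $\delta$) and a slightly less clean quantitative dependence: your $\delta$ enters through $\delta/r^n$ with $r\sim\eps/M$, which is comparable in spirit to the paper's $\delta^{1/(n+1)}$. Both arguments are complete and yield the stated lemma.
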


\begin{proof}
In this proof $C$ will denote a generic constant depending only $M$, which may change from line to line.
Set $w(x):=u(x)-\omega \cdot x$.
It is immediate to check that $w$ is $(2M)$-Lipschitz and from our assumptions on $u$ we get
$$
\int_{B_1}|\nabla w(x)|\,dx=
\int_{B_1\cap\{|\nabla w|<\delta\}}|\nabla w(x)|\,dx+\int_{B_1\cap\{|\nabla w|\in [\delta,2M]\}}|\nabla w(x)|\,dx
 \leq C\d.
$$
Hence, by H\"older inequality,
$$
\|\nabla w\|_{L^{n+1}(B_1)}\leq \|\nabla w\|_{L^{1}(B_1)}^{1/(n+1)}\|\nabla w\|_{L^{\infty}(B_1)}^{n/(n+1)}
\leq C\delta^{1/(n+1)},
$$
and applying Sobolev inequality in $W^{1,n+1}(B_1)$ we deduce that there exists a constant $\ell \in \R$ such that
$$
\|w-\ell\|_{L^\infty(B_1)} \leq C\|\nabla w\|_{L^{n+1}(B_1)} \leq C\delta^{1/(n+1)}.
$$
Since
$$
\|w-w(0)\|_{L^\infty(B_1)}\leq \|w-\ell\|_{L^\infty(B_1)}+|\ell-w(0)| \leq 2\|w-\ell\|_{L^\infty(B_1)},
$$
this concludes the proof with $\eps=C\delta^{1/(n+1)}$.
\end{proof}

In the next result we observe that if a Lipschitz function
has local growth close
to the maximal one at many points, then it needs to be uniformly close to an affine map:

\begin{proposition}\label{PP-1}
Let $M>0$. Then, for any~$\eps>0$ there exists~$\mu=\mu(n,\eps,M)\in (0,M)$
such that the following holds: fix~$\sigma\in \partial B_1$,
and let $u:B_1\to \R$ be a $M$-Lipschitz function satisfying
\begin{equation}\label{0.01}
\Big|\big\{x\in B_1 \,:\,
u(x+t_k \sigma)-u(x)\ge (M-\mu)\,t_k
\big\}\Big|\ge (1-\mu)\,|B_1|
\end{equation}
for some sequence~$t_k\searrow0$.
Then $|u(x)-u(0)-M\sigma\cdot x|\le\eps$ for all~$x\in B_1$.
\end{proposition}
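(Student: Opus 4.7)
The plan is to reduce this to Lemma~\ref{Co1} by showing that the hypothesis~(\ref{0.01}) forces $\nabla u$ to be close to the specific vector $M\sigma$ on a set of almost full measure in $B_1$, and then invoking that lemma with $\omega:=M\sigma$ (which is legitimate since $|M\sigma|=M$).

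First I would introduce the difference quotients $g_k(x):=t_k^{-1}\bigl[u(x+t_k\sigma)-u(x)\bigr]$, which are uniformly bounded by $M$ because $u$ is $M$-Lipschitz (extended, if necessary, as an $M$-Lipschitz function to all of $\R^n$ so that $u(x+t_k\sigma)$ is defined for every $x\in B_1$). By Rademacher's theorem $u$ is differentiable almost everywhere in $B_1$, and at every point of differentiability $g_k(x)\to \nabla u(x)\cdot \sigma$ as $k\to\infty$. Rewriting~(\ref{0.01}) as
$$\bigl|\{x\in B_1\,:\, g_k(x)\ge M-\mu\}\bigr|\ge (1-\mu)|B_1|,$$
I would pass to the limit via the reverse Fatou lemma (the indicators are dominated by $1\in L^1(B_1)$). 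At a.e.\ point of differentiability the pointwise $\limsup$ of $\chi_{\{g_k\ge M-\mu\}}$ is bounded above by $\chi_{\{\nabla u\cdot \sigma\ge M-\mu\}}$, so integrating yields
$$\bigl|\{x\in B_1\,:\, \nabla u(x)\cdot \sigma\ge M-\mu\}\bigr|\ge (1-\mu)|B_1|.$$

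Next comes a short pointwise linear-algebra computation: at a point where $|\nabla u|\le M$ and $\nabla u\cdot \sigma\ge M-\mu$, the expansion
$$|\nabla u-M\sigma|^2=|\nabla u|^2-2M\,\nabla u\cdot \sigma+M^2\le 2M\mu$$
gives $|\nabla u-M\sigma|\le \sqrt{2M\mu}$. Consequently the exceptional set $\{x\in B_1\,:\,|\nabla u(x)-M\sigma|\ge \sqrt{2M\mu}\}$ has Lebesgue measure at most $\mu|B_1|$.

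The final step is to apply Lemma~\ref{Co1} with $\omega:=M\sigma$: for the given $\eps$ it supplies a threshold $\delta=\delta(n,\eps,M)>0$, and I would then choose $\mu=\mu(n,\eps,M)\in(0,M)$ small enough that both $\sqrt{2M\mu}\le \delta$ and $\mu|B_1|\le \delta$ hold simultaneously. With this choice the hypothesis of Lemma~\ref{Co1} is satisfied and its conclusion is exactly the asserted estimate $|u(x)-u(0)-M\sigma\cdot x|\le\eps$. The only mildly delicate point is the passage from the almost-full-measure information that holds for each individual $t_k$ to a statement about the almost everywhere limit $\nabla u\cdot \sigma$, which is exactly where the reverse Fatou direction has to be applied; everything else is bookkeeping and the quantitative calibration of $\mu$ against the $\delta$ supplied by the previous lemma.
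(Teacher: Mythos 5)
Your proposal is correct and follows essentially the same route as the paper: both arguments upgrade the almost-full-measure condition for each $t_k$ to an almost-full-measure bound on the limsup set (you via reverse Fatou, the paper via monotone convergence on $A_\star=\bigcap_k\bigcup_{j\ge k}A_j$), then deduce $\nabla u\cdot\sigma\ge M-\mu$ a.e.\ there, run the same elementary estimate to get $|\nabla u-M\sigma|\lesssim\sqrt{M\mu}$, and conclude with Lemma~\ref{Co1} applied to $\omega=M\sigma$. Your expansion $|\nabla u-M\sigma|^2\le 2M\mu$ is a slightly cleaner, coordinate-free version of the paper's componentwise computation, but the substance is identical.
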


\begin{proof}

Up to a rotation we can assume that~$\sigma=e_1$.
Set
\begin{equation}\label{7.8}
A_k:=\big\{x\in B_1 \,:\,
u(x+t_k e_1)-u(x)\ge (M-\mu)\,t_k
\big\}\end{equation}
and
$$ A_\star :=\bigcap_{k=0}^{+\infty} \bigcup_{j=k}^{+\infty} A_j.$$
Notice that~$|A_k|\ge(1-\mu)\,|B_1|$ (thanks to~\eqref{0.01}) and
$$ \bigcap_{k=0}^{m} \bigcup_{j=k}^{+\infty} A_j\,\supseteq\, A_m.$$
Therefore, by monotone convergence,
\begin{equation}\label{0.02} |A_\star|=\lim_{m\nearrow\infty}
\left|\bigcap_{k=0}^{m} \bigcup_{j=k}^{+\infty} A_j
\right|\ge 
\lim_{m\nearrow\infty}|A_m|\ge(1-\mu)\,|B_1|.\end{equation}
Let $D\subset B_1$ denote the set of differentiability points of $u$ (recall that $D$ has full measure).
We claim that
\begin{equation}\label{0.03}
\sup_{x\in A_\star\cap D} |\nabla 
u(x)-Me_1|<\mu^{1/4}.\end{equation}
For this, we take~$x\in A_\star\cap D$. By definition of $A_\star$ there 
exists a subsequence $j_k\nearrow \infty$ such that~$x\in A_{j_k}$, thus,
by~\eqref{7.8},
$$ u(x+t_{j_k} e_1)-u(x)\ge (M-\mu)\,t_{j_k}.$$
Dividing by~$t_{j_k}>0$ and letting~$k\nearrow\infty$ we obtain (recall that $t_{j_k}\searrow 0$ as $k \nearrow \infty$)
\begin{equation}\label{9.9}
\partial_1 u(x)\ge (M-\mu).\end{equation}
As a consequence
$$ M^2\ge |\nabla u(x)|^2\ge (M-\mu)^2+\sum_{i=2}^n |\partial_i u(x)|^2,$$
which gives
$$ \sum_{i=2}^n |\partial_i u(x)|^2\le 2M\mu.$$
Also~\eqref{9.9} and the fact that~$\partial_1 u(x)\le M$ imply that
$$ -\mu\le\partial_1 u(x)-M\le 0,$$
hence~$|\partial_1 u(x)-M|\le\mu$. We conclude that
$$ |\nabla u(x)-Me_1|^2=
|\partial_1 u(x)-M|^2+\sum_{i=2}^n |\partial_i u(x)|^2\le
\mu^2+2M\mu< \sqrt\mu$$
provided~$\mu$ is sufficiently small, proving~\eqref{0.03}.

By~\eqref{0.02} and~\eqref{0.03} we deduce that
$$ \Big| \big\{ x\in B_1 \,:\,
|\nabla u(x)-Me_1|\ge\mu^{1/4}\big\}\Big|\le
|B_1\setminus A_\star|\le \mu\,|B_1|\le\mu^{1/4}.$$
Hence, if $\mu$ is small enough, we can apply 
Lemma~\ref{Co1} with $\delta=\mu^{1/4}$ to obtain the desired result.
\end{proof}

\section{A remark on flat blow-downs}\label{S:B}

First of all, we recall here the notion of blow-up and blow-down of a~$s$-minimal surface~$E$, which
will be used in the proofs of Theorems~\ref{T1} and~\ref{T2}. 

Assume that~$0\in\partial E$, define the family of sets $E_r:=E/r$, and let $E_0$ (resp. $E_\infty$) be a cluster point with respect to the $L^1_{\rm loc}$-convergence for
$E_r$ as $r \searrow 0$ (resp. $r\nearrow \infty$).

With this notation, $E_0$ is called a ``blow-up'' of $E$ (at $0$), while $E_\infty$ is called a ``blow-down''.
By \cite[Theorem 9.2]{CRS} we know that~both $E_0$ and $E_\infty$ are
$s$-minimal cones passing through the origin.

In the proof of Theorem~\ref{T2} we will use the following observation:

\begin{lemma}\label{BL}
If~$E_\infty$ is a half-space, then~$E=E_\infty$.
\end{lemma}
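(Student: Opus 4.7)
My plan is to combine the $L^1_{\mathrm{loc}}$ convergence provided by the blow-down with the iterative improvement-of-flatness theorem for $s$-minimal surfaces developed in~\cite{CRS,SV,BFV}. The guiding intuition is: if $\partial E$ is very flat at a huge scale $r_k\to\infty$, then iterating the flatness-decay estimate inward from scale $r_k$ down to a fixed scale $\rho$ produces an error multiplied both by the (small) initial flatness $\eta_k$ \emph{and} by the factor $(\rho/r_k)^{\alpha}\to 0$. Hence $\partial E\cap B_\rho$ must be perfectly flat, which rigidifies $E$ to a half-space.

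Concretely, I would set $E_\infty=\{x\cdot\nu\le 0\}$ and pick a sequence $r_k\nearrow\infty$ with $E_{r_k}:=E/r_k\to E_\infty$ in $L^1_{\mathrm{loc}}$. By the uniform density estimates for $s$-minimal surfaces~\cite{CRS}, this convergence upgrades to local Hausdorff convergence of the boundaries, so for every $\eta>0$
\[
\partial E_{r_k}\cap B_1 \;\subset\; \{\,|x\cdot\nu|<\eta\,\}
\]
for all $k$ large. The iterative form of the improvement-of-flatness theorem yields universal constants $\eta_0>0$, $\theta\in(0,1/2)$ and $\alpha>0$ and, for any $s$-minimal $F$ in $B_1$ with $0\in\partial F$ and flatness $\eta\le\eta_0$ relative to some direction $\nu$, a sequence of unit vectors $\nu_m$ (uniformly close to $\nu$) and shifts $c_m$ (with $|c_m|\le C\,\theta^{m(1+\alpha)}\,\eta$) such that
\[
\partial F\cap B_{\theta^m} \;\subset\; \{\,|x\cdot\nu_m-c_m|<\theta^{m(1+\alpha)}\eta\,\},\qquad m\ge 1.
\]
Applying this to $F=E_{r_k}$ with $\eta=\eta_k\searrow 0$ and rescaling by $r_k$, I obtain, for each fixed $\rho>0$ and $k$ sufficiently large,
\[
\partial E\cap B_\rho \;\subset\; \{\,|x\cdot\tilde\nu_{\rho,k}|<C\,\rho^{1+\alpha}\,r_k^{-\alpha}\,\eta_k\,\},
\]
where I have used $0\in\partial E$ to absorb the shifts into the error.

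Since $r_k\to\infty$ and $\eta_k\to 0$, the strip width tends to zero; by compactness $\tilde\nu_{\rho,k}\to\tilde\nu_\rho$ along a subsequence, and passing to the limit yields $\partial E\cap B_\rho\subset\{x\cdot\tilde\nu_\rho=0\}$. The density estimates on both sides of $\partial E$ then force the equality $\partial E\cap B_\rho=\{x\cdot\tilde\nu_\rho=0\}\cap B_\rho$. Since this hyperplane is uniquely determined by $\partial E\cap B_\rho$, the directions $\tilde\nu_\rho$ coincide (up to sign) for all $\rho$; hence $\partial E$ is a single global hyperplane through the origin, $E$ is a half-space, and since half-spaces through the origin are scale-invariant, the $L^1_{\mathrm{loc}}$ convergence $E=E/r_k\to E_\infty$ identifies $E$ with $E_\infty$. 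The main delicate point is the iteration with a consistent tracking of the rotation of the approximating hyperplane so that a unique limiting direction exists in the limit $k\to\infty$; this is standard in the regularity theory of~\cite{CRS,SV,BFV}, but must be invoked in the iterative, quantitative form rather than just the one-step statement.
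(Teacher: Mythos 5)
Your proof is correct and follows essentially the same strategy as the paper: upgrade the $L^1_{\rm loc}$ convergence of the blow-down to Hausdorff convergence via the density estimates, apply the $\eps$-regularity theory of \cite{CRS} to $E_{r_k}$ at scale $1$, and let the factor $r_k^{-\alpha}$ kill the oscillation at every fixed scale. The only difference is one of packaging: instead of unwinding the improvement-of-flatness iteration and tracking the rotating hyperplanes $\nu_m$ and shifts $c_m$, the paper invokes the one-step conclusion of \cite[Theorem~6.1]{CRS} (a $C^{1,\alpha}$ graph $u_{r_k}$ with universal bounds) and uses the scaling identity $r_k^{\alpha}[\nabla u]_{C^{\alpha}(B_{r_k/2})}=[\nabla u_{r_k}]_{C^{\alpha}(B_{1/2})}\le C$ to conclude directly that $\nabla u$ is constant, which sidesteps the ``delicate point'' you flag at the end.
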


\begin{proof} Up to a rotation we can assume that~$E_\infty=\{(x,\tau)\in\R^n\times\R \,:\, \tau\le 0\}$.
Let $r_k \nearrow \infty$ be a sequence such that $E_{r_k} \to E_\infty$, and let 
$\eps_0$ be the universal flatness parameter in~\cite[Theorem~6.1]{CRS}.
By the uniform density estimates for $s$-minimal surfaces (see \cite[Theorem 4.1]{CRS}), we have
that $(\partial E_{r_k}) \cap B_1 \to (\partial E_\infty)\cap B_1$ in the Hausdorff distance as $r_k \nearrow \infty$.
Hence, for $r_k$ sufficiently large
$E_{r_k}\cap B_1$ lies in an~$\eps_0$-neighborhood of~$E_\infty$,
and \cite[Theorem~6.1]{CRS} yields that~$(\partial E_{r_k})\cap B_{1/2}$ is a $C^{1,\alpha}$-graph parameterized by a function $u_{r_k}:
B_1 \to\R$, with~$\|u_{r_k}\|_{C^{1,\alpha}(B_{1/2})}\le C$
for some universal constants~$\alpha\in(0,1)$
and~$C>0$.

Scaling back, we deduce that~$(\partial E)\cap B_{r_k/2}$
coincides with the graph of
a function $u$ which satisfies $u(x)=r_k u_{r_k} (x/r_k)$
and $u(0)=0$ (since $0 \in \partial E$).
Hence
$$r_k^\alpha [\nabla u]_{C^{\alpha}(B_{r_k/2})}= [\nabla u_{r_k}]_{C^{\alpha}(B_{1/2})}\le C,$$
and by letting~$r_k\nearrow \infty$ we see that~$\nabla u$ is constant. Thus~$u$
is a linear function, which implies that $E$ is a half-space.
Since $0 \in \partial E$ it is immediate to check that $E=E_r$ for all $r>0$,
therefore (by letting $r \nearrow \infty$) $E=E_\infty$ as desired.
\end{proof}

\section{Proof of Theorem~\ref{T1}}\label{S:2}
By \cite[Theorem~6.1]{CRS} and \cite[Theorem 5]{BFV}, there exists $\eps_0>0$ such that,
if $B_r(X)\subset B_1$ and $(\partial E)\cap B_r(X)$ lies in a slab of height~$2\eps_0r$,
then $(\partial E)\cap B_{r/2}(X)$ is $C^\infty$.
Hence we only need to show that, for any $X \in B_1$, there exists a radius $r<1-|X|$ such that
$(\partial E)\cap B_r(X)$ lies in a slab of height~$2\eps_0r$.

So, we fix $X_0 \in B_1$, we suppose (up to a change of coordinates)
that $\partial E$ is a Lipschitz graph in the $e_{n+1}$-direction in a neighborhood of $X$,
and we assume by contradiction that, for any $r>0$ small, 
$(\partial E)\cap B_r(X_0)$ is never trapped inside a slab of height~$2\eps_0r$.

Up to translate the system of coordinate we can assume that $X_0=0$,
and we consider a  blow-up~$E_0$ of~$E$ (recall the notation
of blow-ups presented in Section \ref{S:B}).
By \cite[Theorem 9.2]{CRS} we know that~$E_0$
is a Lipschitz $s$-minimal cone passing through the origin,
and, by uniform density estimates for $s$-minimal surfaces (see \cite[Theorem 4.1]{CRS}),
it is immediate to check that 
$(\partial E_0)\cap B_R$ is never trapped inside a slab of height~$2\eps_0R$ for any $R>0$.

Now, up to a standard ``dimension reduction argument'' (see \cite[Theorem 10.3]{CRS})
we can ``remove'' all the singular points
of $\partial E_0$ except the origin and we end up\footnote{We notice that,
since~$\partial E_0$ is a Lipschitz graph, one can perform the
dimension reduction argument without changing system of coordinates. Therefore,
after a finite number of blow-ups, we still end up with a Lipschitz graph.}
with the following situation:
$E_0$ is a Lipschitz
cone passing through the origin,
\begin{equation}\label{C2}
{\mbox{$(\partial E_0)\cap B_1$ does not
lie in a slab of height~$\eps_0$,}}\end{equation}
and $\partial E_0$ is a Lipschitz graph in the $e_{n+1}$-direction which is 
smooth outside the origin, that is
$$ E_0=\big\{ X=(x,\tau)\in\R^n\times\R \,:\,
\tau<u(x)\big\},$$
\begin{equation}\label{Cinf}
u(0)=0,\qquad
u\in C^2(\R^n\setminus\{0\}),\qquad  |\nabla u(x)|\le M\quad \text{for any~$x\ne0$}.
\end{equation} 
To be precise, the dimension reduction argument in~\cite{CRS}
gives that $u\in C^{1,\alpha}(\R^n\setminus\{0\})$, and by~\cite[Theorem 1]{BFV} we obtain that
$u\in C^\infty(\R^n\setminus\{0\})$.
Of course we can take $M>0$ to be the smallest possible (i.e., $M$ is
the optimal Lipschitz constant of~$u$).


Take~$\mu_0:=\mu(n,\eps_0/2,M)$ as
in Proposition~\ref{PP-1}. Then it follows from~\eqref{C2}
that~\eqref{0.01} cannot hold true.
Hence, for any~$\sigma\in \partial B_1$
there exists~$t_\sigma>0$
such that
\begin{equation}\label{09} \Big|\big\{x\in B_1 \,:\,
u(x+t \sigma)-u(x)< (M-\mu_0)\,t
\big\}\Big|\ge\mu_0\,|B_1|\end{equation}
for all~$t\in(0,t_\sigma)$.
Now we take~$w_0\in C^\infty(\R,[0,1])$, with~$w_0(t)=0$ for any~$t\in(-\infty,1/4]\cup[3/4,+\infty)$
and~$w_0(t)=1$ for any~$t\in[2/5,3/5]$. We set~$w(x)=w_0(|x|)$ and we observe that
\begin{equation}\label{12}
{\mbox{$w(x)=1$ for any~$x\in B_{3/5}\setminus B_{2/5}$.}}
\end{equation}
Our goal is to
show that there exists a constant~$\theta>0$ such that
\begin{equation}\label{10}
u(x+t\sigma)\le u(x)+M t-\theta t w(x)\qquad \forall\,x\in B_1,\,t\in(0,t_\sigma),\,\sigma\in \partial B_1.
\end{equation}
Before proving~\eqref{10} we observe 
that, once~\eqref{10}
is established, we easily reach a contradiction and complete the proof
of Theorem~\ref{T1}. Indeed, letting $t \searrow0$ in~\eqref{10} and using~\eqref{12}
we deduce that
$$ \nabla u(x)\cdot\sigma\le M -\theta w(x)=M-\theta\qquad \forall\,x\in B_{3/5}\setminus B_{2/5},\,\sigma\in \partial B_1,$$
hence
$$
|\nabla u(x)|\le M-\theta\qquad \forall\,x\in B_{3/5}\setminus B_{2/5}
$$ by the arbitrariness of $\sigma$.
Since~$\nabla u$ is homogeneous of degree zero it follows that~$|\nabla u(x)|\le M-\theta$
for any~$x\ne0$, which contradicts our assumption that~$M$ was the optimal Lipschitz
constant of~$u$.
So, it only remains to prove~\eqref{10}.

For this we consider the surfaces
\begin{eqnarray*}
&& F:=\{ (x,\tau)\in\R^n\times\R \,:\, \tau<u(x+t\sigma)\}\\
{\mbox{and }}&& G_{\theta,\alpha}:=
\{ (x,\tau)\in\R^n\times\R \,:\, \tau<u(x)+(M-\theta w(x))t+\alpha\}.
\end{eqnarray*}
Notice that \eqref{10} is equivalent to 
\begin{equation}\label{theta0}
F \subseteq G_{\theta,0}.
\end{equation}
To prove \eqref{theta0} we first observe that
$$ u(x+t\sigma)\le u(x)+Mt\le u(x)+(M-\theta w(x))t+Mt$$
provided $\theta \leq M$, 
thus~$F\subseteq G_{\theta,\alpha}$ for any~$\alpha>Mt$. Now we reduce $\alpha$
till we find a critical~$\alpha_0$ for which~$G_{\theta,\alpha_0}$
touches~$F$ from above. We claim that
\begin{equation}\label{theta0.1}
\alpha_0\le0.\end{equation}
Suppose by contradiction that~$\alpha_0>0$. Since $u$ is $M$-Lipschitz
we have that
$$
u(x+t\sigma) \leq u(x)+Mt <u(x)+Mt+\alpha_0.
$$
which implies that $G_{\theta,\alpha_0}$
and~$F$ can only touch at a some point~$X_0=(x_0,t_0)\in\R^n\times \R$
with $x_0 \in {\rm supp}(w)\subset B_{3/4}\setminus B_{1/4}$.
Hence, it is easy to see (by compactness) that a contact point $X_0=(x_0,t_0)$ exists,
and since  $x_0 \in B_{3/4}\setminus B_{1/4}$ we have that 
both sets are uniformly $C^2$ near~$X_0$, so the 
$s$-mean curvature operators~$I[F]$ and~$I[G_{\theta,\alpha_0}]$ (recall \eqref{DE})
may be computed at~$X_0$ in the classical sense.

Since~$F$ and~$G_{0,\alpha_0}$ are $s$-minimal surfaces, we have that
\begin{equation}\label{co1}
I[F](X_0)=0=I[G_{0,\alpha_0}](X_0).
\end{equation}
Also, since~$G_{\theta,\alpha_0}$ is a $C^2$-diffeomorphism of~$G_{0,\alpha_0}$
of size~$\theta t$, and $G_{\theta,\alpha_0}$ is uniformly~$C^2$
in a neighborhood of~$X_0$, we have that
\begin{equation}\label{co2}
\big|I[G_{\theta,\alpha_0}](X_0)\big|\le C\theta t,
\end{equation}
for some universal constant~$C>0$.
Furthermore, since~$F\subseteq G_{\theta,\alpha_0}$ we have that
\begin{equation}\label{219}
\chi_{G_{\theta,\alpha_0}}-\chi_{G_{\theta,\alpha_0}^c}-\chi_F+\chi_{F^c}=2\chi_{G_{\theta,\alpha_0}\setminus F}.\end{equation}
Now we define
$$ Z:=
\big\{x\in B_1\,:\,
u(x+t \sigma)-u(x)< (M-\mu_0)\,t
\big\}\subset\R^n$$
and
$$ W:=\big\{ (x,\tau)\in\R^n\times \R \,:\, x\in Z {\mbox{ and }} u(x+t\sigma)<\tau<u(x+t\sigma)+\mu_0t/2\big\}
\subset\R^{n+1}.$$
We remark that~$|Z|\ge\mu_0\,|B_1|$ thanks to~\eqref{09},
therefore~$|W|\ge \mu_0^2\,t\,|B_1|/2$. 
(Notice that, by abuse of notation, we are using $|\cdot|$ to denote both the Lebesgue measure in $\R^n$ and $\R^{n+1}$.)

We claim that
\begin{equation}\label{99}
(G_{\theta,\alpha_0}\setminus F)\supseteq W
\end{equation}
provided $\theta$ is sufficiently small.
Indeed, let~$(x,\tau)\in W$. Then~$x\in Z$ and~$u(x+t\sigma)<\tau<u(x+t\sigma)+\mu_0t/2$.
This says that~$(x,\tau)\not\in F$ and
\begin{eqnarray*}
&& \tau<u(x+t\sigma)-\frac{\mu_0t}{2}<
u(x)+(M-\mu_0)\,t+\frac{\mu_0t}{2}
\\ &&\qquad\le
u(x)+Mt-\theta t\le u(x)+Mt-\theta w(x)t\end{eqnarray*}
provided~$\theta\in (0,\mu_0/4)$. This shows that~$(x,\tau)\in G_{\theta,\alpha_0}$
proving~\eqref{99}.

Since by construction $Z\subseteq B_1\subset\R^n$
and $u$ is $M$-Lipschitz with $u(0)=0$, we deduce that~$W\subset B_1\times [-1-2M,1+2M]$,
which implies that 
$$ \sup_{Y\in W} |X_0-Y|\le C_M$$
for some~$C_M>0$, and (by~\eqref{99}) that
$$ \big|(G_{\theta,\alpha_0}\setminus F)\cap W\big|= |W|\ge \frac{\mu_0^2\,t\,|B_1|}{2}.$$
{F}rom this, \eqref{DE}, and~\eqref{219}, we conclude that
\begin{eqnarray*}
I[G_{\theta,\alpha_0}](X_0) -I[F](X_0) &=&
\int_{\R^{n+1}}\frac{2\chi_{G_{\theta,\alpha_0}\setminus F}(Y)}{|X_0-Y|^{n+1+s}}\,dy\\
&\ge&
\int_{W}\frac{2\chi_{G_{\theta,\alpha_0}\setminus F}(Y)}{|X_0-Y|^{n+1+s}}\,dy\\
&\ge& 2 C_M^{-n-1-s}
\int_{W} \chi_{G_{\theta,\alpha_0}\setminus F}(Y)\,dy\\
&=& 2 C_M^{-n-1-s}\,|W|\\
&\ge& 
C_M^{-n-1-s}\mu_0^2\,t\,|B_1|.
\end{eqnarray*}
Hence, 
combining~\eqref{co1} and~\eqref{co2} we get
\begin{equation}\label{3BIS} C\theta t\ge
I[G_{\theta,\alpha_0}](X_0)=
I[G_{\theta,\alpha_0}](X_0)-
I[F](X_0)\ge
C_M^{-n-1-s}\mu_0^2\,t\,|B_1|,\end{equation}
which is a contradiction if~$\theta$ is sufficiently small.
This contradiction proves~\eqref{theta0.1},
that in turn implies~\eqref{theta0} and so~\eqref{10}.
This concludes the proof of Theorem~\ref{T1}.

\section{Proof of Theorem~\ref{T2}}\label{S:3}
Let $E_\infty$ be a blow-down of $E$, that is a cluster point for $E_r:=E/r$
as $r \nearrow \infty$.
In this way we get a $s$-minimal cone, and the assumption that 
no singular $s$-minimal cones exist in dimension $n$ combined with a standard dimension reduction argument
implies that $E_\infty$ can only be singular at the origin.

Also, because $\partial E$ was a graph, $E_\infty$ is an hypograph in $\R^{n+1}$, that is
\begin{equation}
\label{epigr}
(x,\tau) \in E_\infty \quad \Longrightarrow \quad (x,\tau - t) \in E_\infty \qquad \forall\, t \geq 0. 
\end{equation}
Now we show that $E_\infty$ is in fact a graph (and not only
an hypograph). 
For this, suppose by contradiction
that
there exists $\tau_\infty>0$ such that $\partial E_\infty$ touches $\partial E_\infty+\tau_\infty e_{n+1}$ at some point.
Then,
by the strong maximum principle\footnote{A simple and direct way to see the strong maximum
principle is to use that $E_\infty$ and $E_\infty+\tau_\infty e_{n+1}$ are smooth cones outside the origin. So, if they touch, we can find a contact point $X_0 \neq 0$, and by computing the operator $I$
defined in \eqref{DE} at $X_0$ for both surfaces, since both $E_\infty$ and $E_\infty+\tau_\infty e_{n+1}$
are $s$-minimal and $E_\infty\subset E_\infty+\tau_\infty e_{n+1}$
we get
$$
0=I[E_\infty+\tau_\infty e_{n+1}](X_0)-I[E_\infty](X_0)=
\int_{\R^{n+1}}\frac{2\chi_{E_\infty+\tau_\infty e_{n+1}\setminus E_\infty}(Y)}{|X_0-Y|^{n+1+s}}\,dy,
$$
which implies that $E_\infty=E_\infty+\tau_\infty e_{n+1}$, as desired.}
we get $E_\infty=E_\infty+\tau_\infty e_{n+1}$,
from which (iterating this equality) we get $E_\infty=E_\infty+k\tau_\infty e_{n+1}$ 
for any $k \in \mathbb N$. This fact
combined with \eqref{epigr} implies that 
$$
E_\infty=\C\times \R,
$$
where $\C$ is a $s$-minimal cone in $\R^n$. Hence it follows by our assumption that
$\C$ is a half-space,
and Lemma~\ref{BL}
gives that $E=\C\times \R$
which is in contradiction with the fact that $E$ was a graph.

Hence we have shown that $\partial E_\infty$ and $\partial E_\infty+\tau e_{n+1}$
never touch for any $\tau>0$, which implies that
$\partial E_\infty$ is the graph of a function $u_\infty:\R^n\to \R$.
In addition, since $E_\infty$ is smooth outside the origin, so is $u_\infty$.

Now, as in the proof of Theorem~\ref{T1}, we consider
a bump function~$w_0\in C^\infty(\R,[0,1])$, with~$w_0(t)=0$ for any~$t\in(-\infty,1/4]\cup[3/4,+\infty)$
and~$w_0(t)=1$ for any~$t\in[2/5,3/5]$, and we define~$w(x)=w_0(|x|)$.
Then, we fix $\sigma \in \partial B_1$ and  consider the family of sets
$$
F_t:=\bigl\{(x,\tau)\,:\, \tau \leq u_\infty\bigl(x+t\theta w(x)\sigma\bigr)-t\bigr\},
$$
where $t\in[0,1]$
and $\theta >0$.
By compactness 
we see that, if $\theta$ is sufficiently small, then $F_1\subseteq E_\infty$.
Let $t_0 \in [0,1]$ be the smallest $t$ for which $F_t \subseteq E_\infty$,
and assume by contradiction that $t_0>0$.
Since~$E_\infty$ is a graph, we see that $F_{t_0}$ can only touch $E_\infty$ from below at some point $X_0=(x_0,t_0)$ with $x_0 \in {\rm supp}(w)\subset B_{3/4}\setminus B_{1/4}$.
Hence, it is easy to see (by compactness) that a contact point $X_0=(x_0,t_0)$ exists,
and since  $x_0 \in B_{3/4}\setminus B_{1/4}$ we have that 
both sets are smooth near $X_0$.

Therefore
we can easily adapt the arguments provided in~\eqref{co1}--\eqref{3BIS} as follows:
First, by the~$s$-minimality of $E_\infty$ we have that~$I[E_\infty](X_0)=0=I[F_0](X_0)$.
Also, since~$F_{t_0}$ is a $C^2$-diffeomorphism of~$F_0$ of size~$\theta t_0$
and~$F_{0}$ is uniformly $C^2$ in a neighborhood of~$X_0$, we have that
\begin{equation}\label{P0}
\bigl| I[F_{t_0}](X_0) \bigr| \leq C\theta t_0.
\end{equation}
On the other hand, since the graph of $u_\infty$ is uniformly Lipschitz in a non-trivial fraction of points
(just pick a point where the tangent space to $\partial E_\infty$ is not vertical and consider a small neighborhood of this point)
we see that $\partial F_{t_0}$ and $\partial E_\infty$ lie at distance $\geq ct_0$ on a non-trivial fraction of points,
therefore
$$ \big| (E_\infty\setminus F_{t_0}) \cap B_1\big|\ge c_0 t_0$$
for some~$c_0>0$. Hence, arguing as in the proof of Theorem \ref{T1} we get
$$ \bigl| I[F_{t_0}](X_0)\bigr|=
\bigl| I[F_{t_0}](X_0)-I[E_\infty](X_0) \bigr| \geq c't_0
$$
for some~$c'>0$, which is in contradiction with~\eqref{P0}
if $\theta$ was chosen sufficiently small.

This proves that $t_0=0$, which implies that $F_t\subseteq E_\infty$ for any $t \in (0,1)$, or equivalently
$$ \frac{u_\infty(x+t\theta w(x)\sigma)-u_\infty(x)}{t}\le 1
\qquad \forall \,t \in (0,1).$$
Hence, letting $t \searrow 0$ we obtain
$$
\theta w(x)\nabla u_\infty(x)\cdot \sigma\le1\qquad \forall\,x\in\R^n\setminus\{0\},\,\sigma\in\partial B_1,
$$
which combined with the fact that~$w=1$ in~$B_{3/5}\setminus B_{2/5}$ and $\sigma \in \partial B_1$
is arbitrary implies
$$
|\nabla u_\infty(x)|\le1/\theta\qquad \forall\,x\in
B_{3/5}\setminus B_{2/5}.
$$

Since $u_\infty$ is $1$-homogeneous we deduce that $u_\infty$ is globally Lipschitz.
So by Theorem~\ref{T1} it is smooth also at the origin, hence (being a cone) $E_\infty$ a half-space.
Using again Lemma~\ref{BL}
we deduce that~$E$ is a half-space as well, 
concluding the proof of Theorem~\ref{T2}.

\begin{bibdiv}
\begin{biblist}

\bib{ADM}{article}{
   author={Ambrosio, L.},
   author={De Philippis, G.},
   author={Martinazzi, L.},
   title={Gamma-convergence of nonlocal perimeter functionals},
   journal={Manuscripta Math.},
   volume={134},
   date={2011},
   number={3-4},
   pages={377--403},
   issn={0025-2611},
   review={\MR{2765717 (2012d:49084)}},
   doi={10.1007/s00229-010-0399-4},
}

\bib{BFV}{article}{
   author={Barrios, B.},
   author={Figalli, A.},
   author={Valdinoci, E.},
   title={Bootstrap regularity for integro-differential operators and its application to nonlocal minimal surfaces},
   journal={Ann. Sc. Norm. Super. Pisa Cl. Sci. (5)},
}

\bib{CRS}{article}{
   author={Caffarelli, L.},
   author={Roquejoffre, J.-M.},
   author={Savin, O.},
   title={Nonlocal minimal surfaces},
   journal={Comm. Pure Appl. Math.},
   volume={63},
   date={2010},
   number={9},
   pages={1111--1144},
   issn={0010-3640},
   review={\MR{2675483 (2011h:49057)}},
   doi={10.1002/cpa.20331},
}

\bib{CV}{article}{
   author={Caffarelli, L.},
   author={Valdinoci, E.},
   title={Uniform estimates and limiting arguments for nonlocal minimal
   surfaces},
   journal={Calc. Var. Partial Differential Equations},
   volume={41},
   date={2011},
   number={1-2},
   pages={203--240},
   issn={0944-2669},
   review={\MR{2782803 (2012i:49064)}},
   doi={10.1007/s00526-010-0359-6},
}

\bib{DG}{article}{
   author={De Giorgi, E.},
   title={Una estensione del teorema di Bernstein},
   journal={Ann. Scuola Norm. Sup. Pisa (3)},
   volume={19},
   date={1965},
   pages={79--85},
}

\bib{SV}{article}{
   author={Savin, O.},
   author={Valdinoci, E.},
   title={Regularity of nonlocal minimal cones in dimension $2$},
   journal={Calc. Var. Partial Differential Equations},
   issn={0944-2669},
   doi={10.1007/s00526-012-0539-7},
}

\end{biblist}
\end{bibdiv}

\end{document}